\newtheorem{Theorem}{Theorem}[section]
\newtheorem{Remark}[Theorem]{Remark}
\newtheorem{Corollary}[Theorem]{Corollary}
\newtheorem{Lemma}[Theorem]{Lemma}
\newtheorem{Proposition}[Theorem]{Proposition}
\newtheorem{Definition}[Theorem]{Definition}
\def\afooter#1#2{{\def\thefootnote{\mbox{${}^{#1}$}}\mbox{\footnotemark[0]}
        \footnotetext{#2} }}
\newcommand{\dlra}[1]{\stackrel{#1}{\longrightarrow}}
\newcommand\hra{\hookrightarrow}
\title{On the mapping class groups of $\#_r(S^p \times S^p)$ for $p = 3, 7$
\afooter{\mbox{ \ }}{Isotopy groups, self-homotopy equivalences of manifolds
\\ \hspace*{5mm} 2000 Mathematics
Subject Classification:  57R52, 55P10.}} 
\author{Diarmuid J. Crowley}
\date{\today}
\begin{document}
\maketitle

\begin{abstract}
For $M_r^{} := \sharp_r(S^p \times S^p)$, $p=3, 7$, we calculate $\pi_0{\rm Diff}(M_r)/\Theta_{2p+1}$ and $\mathcal{E}(M_r)$, respectively the group of isotopy classes of orientation preserving diffeomorphisms of $M_r$ modulo isotopy classes with representatives which are the identity outside a $2p$-disc and the group of homotopy classes of orientation preserving homotopy equivalences of $M_r$.
\end{abstract}


\section{Introduction}
Let $M$ be a closed smooth oriented $(k-1)$-connected $2k$-manifold with $k \geq 3$.  We consider the problem of understanding certain mapping class groups of $M$: for example $\pi_0{\rm Diff}(M)$, the isotopy classes of  orientation preserving diffeomorphisms of $M$ and $\mathcal{E}_{}(M)$, the homotopy classes of  orientation preserving homotopy equivalences of $M$.

We first simplify the problem for diffeomorphisms by considering the quotient
\[{\rm Aut}(M^{}) := \pi_0{\rm Diff}_{}(M^{})/\Theta_{2k+1}\]
where we regard $\Theta_{2k+1}$ as the group of isotopy classes of diffeomorphisms of the $2k$-disc  $D^{2k}$ fixed on the boundary so that extension by the identity from $D^{2k} \subset M$ to all of $M$ gives rise to an action of $\Theta_{2k+1}$ on $\pi_0{\rm Diff}(M)$. 

Assuming that $M$ is almost-parallelisable, results of Kreck \cite{Kre} lead to a short exact sequence with middle term $\rm{Aut}(M)$ and known kernel and cokernel.  Later Baues \cite{B} gave a similar sequence for $\mathcal{E}(M)$ and related the two sequences. This left extension problems which were solved in many cases (\cite{F, B, Kry2}) but which remain open in general.  

We focus on the case where $M$ is the $r$-fold connected sum of a product of spheres
\[ M_r^{2k} := \#_r(S^k \times S^k).\]
A fundamental invariant of a mapping class $[f: M_r^{2k} \to M_r^{2k}]$ is the induced automorphism $f_*$ of $H_k(M) : = H_k(M; \mathbb{Z})$.  Necessarily $f_*$ is also an automorphism of  the intersection form of $M_r^{2k}$
\[ \phi_r \colon H_k(M_r^{2k}) \times H_k(M_r^{2k}) \to \mathbb{Z}. \]
Specialising further to Hopf-invariant one dimensions $k = p = 3$ or $7$ we simply write $M_r$ for $M_r^{2p}$.  By \cite{Kre}[Theorem$\, 2$] and \cite{B}[Theorem$\, 10.3]$ the forgetful homomorphism $\mathcal{E} : {\rm Aut}(M_r) \rightarrow \mathcal{E}(M_r)$ fits into the following commuting diagram of extensions
\begin{equation} \label{exteqn}
\begin{diagram}
\divide\dgARROWLENGTH by 2
\node{0} \arrow{e} \node{H_p(M_r^{})} \arrow{e,t}{} \arrow{s,r}{} \node{{\rm Aut}_{}(M_r)} \arrow{e,t}{\alpha} \arrow{s,r}{\mathcal{E}} \node{{\rm Aut}(\phi_r)} \arrow{s,r}{=} \arrow{e} \node{1}\\
\node{0} \arrow{e} \node{H_p(M_r) \otimes \pi_{2p}(S^p)} \arrow{e,t}{} \node{\mathcal{E}_{}(M_r)} \arrow{e,t}{\alpha} \node{{\rm Aut}(\phi_r)} \arrow{e} \node{1}
\end{diagram}
\end{equation}
where $\alpha([f]) = f_*$, $\pi_{2p}(S^p) \cong \mathbb{Z}/12$ or $\mathbb{Z}/120$ as $p = 3$ or $7$ and the left vertical arrow is reduction mod $12$ or $120$.  Our main result is the following
\begin{Theorem} \label{mainthm}
The extensions in $(\ref{exteqn})$ split if and only if $r=1$.
\end{Theorem}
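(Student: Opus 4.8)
The plan is to reduce everything, via the algebraic identification of Corollary \ref{maincor}, to a question about the vanishing of two group-extension classes for $G_r := \mathrm{Aut}(\phi_r) \cong Sp_{2r}(\mathbb{Z})$. Using $S\pi_p(SO_p)\cong\mathbb{Z}$ together with Poincaré duality, the upper kernel $\mathrm{Hom}(H_p(M_r),S\pi_p(SO_p))$ is the standard symplectic representation $V_r\cong\mathbb{Z}^{2r}$, while the lower kernel $H_p(M_r)\otimes S\pi_{2p}(S^p)$ is $V_r\otimes\mathbb{Z}/n=V_r/n$ with $n=12$ or $120$. Writing $e_{\mathrm{up}}\in H^2(G_r;V_r)$ and $e_{\mathrm{low}}\in H^2(G_r;V_r/n)$ for the two classes, splitting is equivalent to their vanishing. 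The first key observation is that the commuting ladder $(\ref{exteqn})$, being the identity on the quotient $G_r$, exhibits the lower extension as the pushout of the upper one along the module map $J\circ\mathrm{PD}$; by the five lemma the induced map from the pushout is an isomorphism, so $e_{\mathrm{low}}=(J\circ\mathrm{PD})_*e_{\mathrm{up}}$. Consequently $e_{\mathrm{up}}=0$ forces $e_{\mathrm{low}}=0$, and contrapositively $e_{\mathrm{low}}\neq 0$ forces $e_{\mathrm{up}}\neq 0$. This reduces the theorem to two claims: for $r=1$ the \emph{upper} extension splits, and for $r\ge 2$ the \emph{lower} extension does not.

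For $r=1$ I would work with the amalgam $Sp_2(\mathbb{Z})=SL_2(\mathbb{Z})\cong\mathbb{Z}/4\ast_{\mathbb{Z}/2}\mathbb{Z}/6$ and its associated Mayer--Vietoris sequence for $H^*(SL_2(\mathbb{Z});V_1)$, with edge group the centre $\langle -I\rangle\cong\mathbb{Z}/2$. The generators of orders $4$ and $6$ have primitive roots of unity as eigenvalues, hence fix no nonzero vector of $V_1\cong\mathbb{Z}^2$, so $2$-periodicity of Tate cohomology gives $H^2(\mathbb{Z}/4;V_1)=H^2(\mathbb{Z}/6;V_1)=0$ and the restriction of the extension to each finite cyclic vertex group splits. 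One must be careful here: the computation shows $H^2(SL_2(\mathbb{Z});V_1)$ is itself nonzero (a quotient of $H^1(\mathbb{Z}/2;V_1)\cong(\mathbb{Z}/2)^2$), so the claim is not that the group vanishes but that the particular class $e_{\mathrm{up}}$ is trivial. I would establish this either by showing that the vertex splittings can be matched across the edge group using the explicit cocycle of Corollary \ref{maincor}, so that the connecting-map image $e_{\mathrm{up}}$ dies, or more concretely by exhibiting diffeomorphisms of $S^p\times S^p$ realising $S$ and $T$ on the nose and satisfying the braid-type relations, giving a geometric section. The lower extension then splits by the pushout remark.

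For $r\ge 2$ the goal is $e_{\mathrm{low}}\neq 0$, and this is where the main difficulty lies. My approach is a commuting-pair obstruction: choose elements $g_1,g_2\in Sp_{2r}(\mathbb{Z})$ supported on two distinct hyperbolic summands $H_1,H_2\subset H_p(M_r)$ (for instance order-$4$ rotations, one in each plane and the identity elsewhere), so that $g_1g_2=g_2g_1$. For any lifts $\tilde g_1,\tilde g_2$ the commutator $[\tilde g_1,\tilde g_2]$ lands in the kernel $V_r/n$ and determines a well-defined class
\[
c(g_1,g_2)\in (V_r/n)\big/\big((g_1-1)(V_r/n)+(g_2-1)(V_r/n)\big)
\]
independent of the choice of lifts. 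A section would make the lifts commute, forcing $c(g_1,g_2)=0$; thus it suffices to feed the explicit $2$-cocycle of Corollary \ref{maincor} into this commutator and check that $c(g_1,g_2)$ is nonzero. Since $g_i-1$ acts invertibly within its own plane only away from the primes dividing $n$, the denominator is a proper submodule, and I expect $c(g_1,g_2)$ to record the mod-$n$ linking of the two summands and to be nontrivial.

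The crux, and the step I expect to be hardest, is precisely this final nonvanishing evaluation: it requires an honest cocycle representative from Corollary \ref{maincor} and careful bookkeeping of the reduced symplectic action on $V_r/n$. It is exactly here that the hypothesis $r\ge 2$ enters, since two commuting elements supported on disjoint hyperbolic planes exist only when there are at least two such planes; for $r=1$ the centraliser structure of $SL_2(\mathbb{Z})$ leaves no room for such a pair, and the class is forced to vanish. This dichotomy---no commuting interaction when $r=1$, an unavoidable one when $r\ge 2$---is the conceptual reason the extensions split precisely for $r=1$.
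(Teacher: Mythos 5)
Your overall architecture (reduce via the identification in Corollary \ref{maincor} to the vanishing of $e_{\mathrm{up}}$ and $e_{\mathrm{low}}$, note $e_{\mathrm{low}}=(J\circ\mathrm{PD})_*e_{\mathrm{up}}$, then split the problem into ``$r=1$: upper splits'' and ``$r\ge 2$: lower does not'') is sound, and your caution that $H^2(SL_2(\mathbb{Z});V_1)\neq 0$, so that one must analyse the particular class, is correct. But the crux of your $r\ge 2$ argument fails: the commuting-pair obstruction $c(g_1,g_2)$ vanishes for exactly the pairs you propose. The paper realises the extension concretely as $\Gamma(\psi,C)=\{(x,A)\in H^*_c\times_\alpha\mathrm{Aut}(\phi)\,|\,\bar x=s(\psi)(A)\}$ with $s(\psi)(A)=\psi\circ A-\psi$, where $\psi$ is a quadratic refinement of $\phi$. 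If $g_1$ is supported on the hyperbolic plane $H_1$, then for $v=v_1+w$ with $w\in H_2\oplus\cdots\oplus H_r$ one has $\psi(g_1v)-\psi(v)=\psi(g_1v_1)-\psi(v_1)$ (the cross terms $\bar\phi(v_1,w)$ and $\bar\phi(g_1v_1,w)$ vanish by orthogonality of the summands), so $s(\psi)(g_1)$ is supported on $H_1^*$. Choosing lifts $\tilde g_i=(x_i,g_i)$ with $x_i$ supported on $H_i^*$ gives $x_1\cdot g_2=x_1$ and $x_2\cdot g_1=x_2$, hence $\tilde g_1\tilde g_2=(x_1+x_2,g_1g_2)=\tilde g_2\tilde g_1$: the lifts commute on the nose, and $c(g_1,g_2)=0$. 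In fact the restriction of the extension to the whole product subgroup $\mathrm{Aut}(\phi|_{H_1})\times\cdots\times\mathrm{Aut}(\phi|_{H_r})$ splits: replacing $\psi$ by the orthogonal sum of the plane-wise Arf invariant one refinements changes $s(\psi)$ only by a coboundary (Lemma \ref{keylem}(iv)), and each such plane-wise refinement is invariant under its $SL_2(\mathbb{Z})$ factor since it takes the value $1$ on all three nonzero mod $2$ classes of its plane. So there is no ``mod-$n$ linking'' between summands, and no obstruction detectable on disjointly supported commuting pairs exists; the class is genuinely global.

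The paper's mechanism (Proposition \ref{keyextprop}) is quite different and is worth contrasting with your plan. By $(\ref{gammaexteqn})$ the extension class is $\partial_c[s(\psi)]$, so splitting is equivalent to lifting $s(\psi)$ to a $1$-cocycle $s_c$ valued in $H^*_c$. Since $\psi(-v)=\psi(v)$, one has $s(\psi)(-\mathrm{Id})=0$, so any lift satisfies $s_c(-\mathrm{Id})=2x$; centrality of $-\mathrm{Id}$ yields the cocycle identity $2s_c(A)=-(s_c(-\mathrm{Id})\cdot A-s_c(-\mathrm{Id}))$, whence $2(s_c(A)+s(x)(A))=0$, and since $4\mid c$ or $c=\infty$ this forces $s_c+s(x)$ to take values in $2H^*_c$, giving $[s(\psi)]=0$ in $H^1(\mathrm{Aut}(\phi);H^*_2)$. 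But $[s(\psi)]=0$ if and only if some refinement has a one-element orbit, and by the orbit counts $2^{2r-1}\pm 2^{r-1}$ this happens only for $r=1$ (the Arf one form). Note this simultaneously repairs your incomplete $r=1$ step: you left the vanishing of $e_{\mathrm{up}}$ as two unexecuted options (matching vertex splittings in the amalgam, or exhibiting explicit diffeomorphisms satisfying the $SL_2(\mathbb{Z})$ relations), whereas the $SL_2(\mathbb{Z})$-invariance of the Arf one refinement makes $s(\psi)$ identically zero after a coboundary, so that $\Gamma(\psi,C)\cong 2H^*_c\times_\alpha\mathrm{Aut}(\phi_1)$ splits outright. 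Your pushforward remark then correctly propagates the splitting to the lower row, but the $r\ge 2$ half of your proof must be replaced along the lines above: the obstruction is $2$-primary and lives on elements mixing the planes together with $-\mathrm{Id}$, not on commutators of disjointly supported elements.
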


\begin{Remark}
{\em A complete algebraic description of the extensions in $(\ref{exteqn})$ appears in Corollary \ref{maincor}.}
\end{Remark}

\begin{Remark} \label{nonsplitrem}
{\em Despite Theorem \ref{mainthm}, \cite{F} and \cite{Kry2} show that $\pi_0{\rm Diff}(S^3 \times S^3) \rightarrow {\rm Aut}(\phi_1)$ does not split.  A complete algebraic description of $\pi_0{\rm Diff}(S^3 \times S^3)$ appears in \cite{Kry2}.}
\end{Remark}

\begin{Corollary} \label{actcor}
If $r >1$ the symplectic group ${\rm Aut}(\phi_r)$ cannot act, even up to homotopy, on $M_r$ such that the induced action on $H_p(M_r)$ is by ${\rm Aut}(\phi_r)$.  If $r=1$ and $p=3$ the same statement holds up to isotopy.
\end{Corollary}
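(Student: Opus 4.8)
The plan is to observe that an action of ${\rm Aut}(\phi_r)$ of the type described is nothing other than a splitting of one of the extensions in $(\ref{exteqn})$, so that Corollary $\ref{actcor}$ reduces to quoting Theorem $\ref{mainthm}$ together with Remark $\ref{nonsplitrem}$. First I would fix the notion of action. An action of a discrete group $G$ on $M_r$ up to homotopy is a homomorphism $G \to \mathcal{E}(M_r)$, and an action up to isotopy is a homomorphism $G \to \pi_0{\rm Diff}(M_r)$; the requirement that the induced action on $H_p(M_r)$ be by ${\rm Aut}(\phi_r)$ says exactly that, for $G = {\rm Aut}(\phi_r)$, the composite with $H_p$ is the identity. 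Thus a homotopy action of the stated type is a group-theoretic section of $H_p \colon \mathcal{E}(M_r) \to {\rm Aut}(\phi_r)$, i.e. a splitting of the lower extension, while an isotopy action is a section of $H_p \colon \pi_0{\rm Diff}(M_r) \to {\rm Aut}(\phi_r)$. Note the hierarchy: a genuine smooth action induces an action up to isotopy, which in turn induces one up to homotopy via the natural map $\pi_0{\rm Diff}(M_r) \to \mathcal{E}(M_r)$, so obstructing the weakest notion obstructs all the stronger ones.

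For $r > 1$, Theorem $\ref{mainthm}$ asserts that the lower extension does not split, so there is no section ${\rm Aut}(\phi_r) \to \mathcal{E}(M_r)$ of $H_p$. Hence ${\rm Aut}(\phi_r)$ cannot act on $M_r$ even up to homotopy inducing the symplectic action on $H_p(M_r)$, and by the hierarchy above the same holds for smooth or isotopy actions; this is the first assertion.

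For $r = 1$ and $p = 3$ the situation is more delicate, and this is where the real care is needed. Here Theorem $\ref{mainthm}$ says the lower extension does split, so a homotopy action does exist and the homotopy statement is no longer an obstruction; the subtlety is that the splitting in $(\ref{exteqn})$ concerns ${\rm Aut}(M_1) = \pi_0{\rm Diff}(M_1)/\Theta_7$ rather than $\pi_0{\rm Diff}(M_1)$ itself. Since an action up to isotopy requires a section into the full group $\pi_0{\rm Diff}(S^3 \times S^3)$, I would invoke Remark $\ref{nonsplitrem}$: by \cite{F} and \cite{Kry2} the map $\pi_0{\rm Diff}(S^3 \times S^3) \to {\rm Aut}(\phi_1)$ admits no section. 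Consequently no section ${\rm Aut}(\phi_1) \to \pi_0{\rm Diff}(S^3 \times S^3)$ exists and ${\rm Aut}(\phi_1)$ cannot act up to isotopy inducing the symplectic action, completing the proof. The one genuine obstacle is exactly this bookkeeping of quotients — distinguishing which of $\mathcal{E}(M_r)$, ${\rm Aut}(M_r)$ and $\pi_0{\rm Diff}(M_r)$ controls each notion of action — after which both assertions are immediate.
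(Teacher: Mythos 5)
Your proof is correct and is essentially the paper's own argument: for $r>1$ a homotopy action would split $\mathcal{E}(M_r) \rightarrow {\rm Aut}(\phi_r)$, contradicting Theorem \ref{mainthm}, and for $r=1$, $p=3$ an isotopy action would split $\pi_0{\rm Diff}(S^3 \times S^3) \rightarrow {\rm Aut}(\phi_1)$, contradicting Remark \ref{nonsplitrem}. Your additional bookkeeping distinguishing $\mathcal{E}(M_r)$, ${\rm Aut}(M_r)$ and $\pi_0{\rm Diff}(M_r)$, and in particular noting that the isotopy case must use the full group $\pi_0{\rm Diff}$ rather than its quotient ${\rm Aut}(M_1)$, is exactly the right point and is implicit in the paper's two-line proof.
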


\begin{proof}
For $r > 1$ such an action would provide a splitting for $\mathcal{E}(M_r) \rightarrow {\rm Aut}(\phi_r)$ which by Theorem \ref{mainthm}, does not exist.  For $r=1$ and $p=3$ such an action would provide a splitting for $\pi_0{\rm Diff}(S^3 \times S^3) \rightarrow {\rm Aut}(\phi_1)$ which by Remark \ref{nonsplitrem} does not exist.
\end{proof}

\begin{Remark} \label{splitrem}
{\em For $M_r^{2k}$ with $k = 4j-1$ and $k \neq 3, 7$, there are similar extensions for ${\rm Aut}(M)$ and $\mathcal{E}(M)$ except that ${\rm Aut}(\phi_r)$ must be replaced by ${\rm Aut}(\psi_r)$ where $\psi_r$ denotes a quadratic refinement of $\phi_r$ with trivial Arf invariant (see Section \ref{algsec}).  By \cite{Kry1} and \cite{B} the extensions for ${\rm Aut}(M_r^{2k})$ and $\mathcal{E}(M_r^{2k})$ always split.  Thus Theorem \ref{mainthm} is another peculiar feature of the existence of maps of Hopf-invariant one. }
\end{Remark}

The rest of the paper is organised as follows: we complete the introduction by recalling the results of \cite{Kre} and \cite{B} which establish the extensions in $(\ref{exteqn})$.  We also explain why the methods of \cite{B} and \cite{Kry1} which give the splittings mentioned in Remark \ref{splitrem} fail in dimensions $6$ and $14$.  In the algebraic Section \ref{algsec} we assemble facts about groups extensions, $1$-cocycles and quadratic forms: \!Corollary \ref{extcor} may be of independent interest.  Section \ref{topsec} contains the topology.  In Proposition \ref{s_Fprop1} we define a $1$-cocycle on ${\rm Aut}(M_r)$ called the global derivative and in Corollary \ref{maincor} the topological extensions of $(\ref{exteqn})$ are identified with the algebraic extensions of $(\ref{gammaexteqn})$ from Section \ref{algsec}, proving Theorem \ref{mainthm}. 

\subsection{Theorems of Kreck and Baues} \label{KBsubsec}
Let $M$ be a closed smooth $(k-1)$-connected almost-parallelisable smooth manifold with $k \geq 3$.   We let ${\rm Aut}(\alpha_M)$ denote the group of automorphisms of $H_k(M)$ which preserve the extended quadratic form $(H_k(M), \phi_M, \alpha_M)$ defined in \cite{Wa}[p.$\,165$]: $\phi_M$ is the usual intersection form and $\alpha_M : H_k(M) \rightarrow \pi_{k-1}(SO_k)$ gives the isomorphism class of the normal bundle of an embedded sphere representing a given element of $H_k(M)$.  Wall showed that every element of ${\rm Aut}(\alpha_M)$ is realised by a diffeomorphism of $M^\bullet : = M-{\rm int}(D^{2k})$.  

Building on \cite{Wa} and applying Cerf's results on psuedo-isotopy from \cite{C}, Kreck gave a pair of exact sequences in \cite{Kre}[Theorem$\, 2$]  which compute $\pi_0{\rm Diff}(M)$.  We adapt these sequences to the simpler case of ${\rm Aut}(M)$:
\begin{equation} \label{Kseqeqn1}
0 \dlra{} {\rm SAut}(M) \dlra{} {\rm Aut}(M) \dlra{\alpha} {\rm Aut}(\alpha_M) \dlra{} 1,
\end{equation}
\vskip -0.4cm
\begin{equation} \label{Kseqeqn2}
0 \dlra{} {\rm SAut}(M) \dlra{\chi} {\rm Hom}(H_k(M), S\pi_k(SO_k)) \dlra{} 0.
\end{equation}
\vskip 0.2cm
\noindent
Here ${\rm SAut}(M)$ is by definition the kernel of $\alpha$, here and elsewhere $S$ denotes stabilisation: in this case $S \colon \pi_k(SO_k) \to \pi_k(SO_{k+1})$ and we explain the isomorphism $\chi$ below.

The elements of ${\rm SAut}(M)$ have representatives $f : M \cong M$ which pointwise fix $k$-spheres representing generators of $H_k(M)$ and the map $\chi$ of $(\ref{Kseqeqn2})$ is defined by taking the derivative of $f$ restricted to the normal bundle of these $k$-spheres: we shall return to this point in the proof of Lemma \ref{s_Fprop1}.

The upper exact sequence of $(\ref{exteqn})$ is obtained from $(\ref{Kseqeqn1})$ and $(\ref{Kseqeqn2})$ as follows.  Firstly the extended quadratic form of $M_r$ is identical to the intersection form $\phi_r$ since $\pi_{p-1}(SO_p) = 0$.  Secondly, identifying
\[ S^2(\pi_p(SO_p)) \cong 2 \mathbb{Z} \subset \mathbb{Z} = \pi_p(SO)\]
we use $\chi$ and Poincar\'{e} duality in the following sequence of isomorphisms
\[ {\rm SAut}(M_r) \cong {\rm Hom}(H_p(M_r), 2 \mathbb{Z}) \cong H^p(M_r; 2 \mathbb{Z}) \cong H_p(M_r; 2 \mathbb{Z)} \cong H_p(M_r).\]

In general we can directly construct an isomorphism $H_k(M) \otimes S\pi_k(SO_k) \cong {\rm SAut}(M)$ using high dimensional Dehn-twists as follows.  Given $v \in H_k(M)$ and $\beta \in S\pi_k(SO_k)$, choose an embedding $V : S^k \times D^k \to M$ representing $v$ and an appropriate smooth map $B: (D^k, S^{k-1}) \rightarrow (SO_{k+1}, {\rm Id})$ representing $\beta$ so that the following map is a diffeomorphism
\begin{equation} \label{def:f-v-beta}
f(V, B) : M \cong M, ~~~ m \longmapsto \left\{\begin{array}{cccc} V(B(y)x, y), & m = V(x, y) \in {\rm Im}(V), \\ m & m \notin {\rm Im}(V). \\ \end{array}   \right.
\end{equation}
Applying $(\ref{Kseqeqn2})$ the following homomorphism is readily seen to be an isomorphism
\[ H_k(M; S\pi_k(SO_k)) \cong H_k(M) \otimes S\pi_k(SO_k) \longrightarrow {\rm SAut}(M), ~~~ v \otimes \beta \longmapsto [f(V, B)]. \]

We now discuss how maps of Hopf invariant one affect splitting results for ${\rm Aut}(M)$: the key point is that if $c_j$ is the order of the cokernel of the stabilisation map 
\[ S^2 : \pi_{4j-1}(SO_{4j-1}) \rightarrow \pi_{4j-1}(SO)\]
then $c_1 = c_2 = 2$ and $c_j = 1$ for $j > 2$.  Next we recall and correct a construction from \cite{Kre} bearing $c_j$ in mind.  Let $k=4j-1$.   Given a diffeomorphism $f : M \cong M$, let 
\[M_f := M \times [0, 1]/(0, m) \sim (1, f(m))\]
be the mapping torus of $f$, let $p_{j}(M_f) \in H^{4j}(M_f)$ be the $j$-th Pontrjagin class of $M_f$ and let $C : M_f \rightarrow \Sigma(M_+)$ be the map collapsing $M \times \{0 \} \subset M_f$ to a point where $\Sigma(M_+)$ denotes the suspension of $M$ plus a disjoint point.  As $\alpha([f]) = {\rm Id}$, the map $C$ induces an isomorphism $C^* : H^{4j}(\Sigma(M_+)) \cong H^{4j}(M_f)$.  Identifying 
\[ H^{4j}(\Sigma(M_+)) = H^{4j-1}(M) =  {\rm Hom}(H_{4j-1}(M), \pi_{4j-1}(SO))\] 
Kreck defines 
\[
p([f]) := C^{*-1}(p_{4j}(M_f)) \in  {\rm Hom}(H_{4j-1}(M), \pi_{4j-1}(SO)).\]

\begin{Lemma}[{Cf.\,\cite{Kre}[Lemma$\, 2$]}]
Let $a_j = (3 - (-1)^j)/2$.  The map $[f] \mapsto p([f])$ defines a homomorphism ${\rm SAut}(M) \rightarrow {\rm Hom}(H_{4j-1}(M), \pi_{4j-1}(SO))$ such that
\[p([f]) = \pm a_j c_j (2j-1)! \,\chi([f]).\]
\end{Lemma}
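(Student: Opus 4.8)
The plan is to verify three things in turn: that $p$ descends to $\pi_0{\rm SAut}(M)$ and is additive, that its value is detected on a basis of $H_{4j-1}(M)$ by a single normal-bundle computation, and finally that this computation produces the constant $\pm a_j c_j(2j-1)!$. Throughout write $k=4j-1$ and read $p_j(M_f)\in H^{4j}(M_f)$ for the relevant Pontrjagin class.

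First I would check well-definedness and additivity. An isotopy $f_0\simeq f_1$ induces a diffeomorphism $M_{f_0}\cong M_{f_1}$ commuting up to homotopy with the collapse maps, so $p$ depends only on the isotopy class; and if $f$ is supported in a disc, so that $[f]$ comes from $\Theta_{2k+1}$, then $M_f\cong(M\times S^1)\#\Sigma$ for a homotopy sphere $\Sigma$, which is stably parallelisable, whence $p_j(M_f)=p_j(M\times S^1)=0$ since $H^{4j}(M)=0$ in our connectivity range. Thus $p$ is defined on $\pi_0{\rm SAut}(M)\subset{\rm Aut}(M)$. For additivity I would form the $M$-bundle over the pair of pants with monodromies $f$, $g$ and $(fg)^{-1}$; this is a coboundary between $M_f\sqcup M_g$ and $M_{fg}$ over which $p_j$ and the collapse maps extend compatibly — here the hypothesis $H_k(f)=H_k(g)={\rm id}$ is exactly what makes the three identifications $c^*$ agree — yielding $p([fg])=p([f])+p([g])$.

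Next I would reduce the computation to a bundle over a sphere. Under the Wang isomorphism $H_{4j}(M_f)\cong H_{4j}(M)\oplus H_{4j-1}(M)$, a generator $x\in H_k(M)$ represented by an embedded sphere $S_x$ fixed by $f$ contributes the submanifold $S_x\times S^1\subset M_f$, and by definition of $c^{*-1}$ one gets $\langle p([f]),x\rangle=\langle p_j(M_f),[S_x\times S^1]\rangle$. Since $TM_f$ is its vertical bundle plus a trivial line, and over $S_x\times S^1$ the vertical bundle splits as $TS_x\oplus E_x$ with $E_x$ the mapping torus of the normal bundle $\nu_x$ under $df|_{\nu_x}$, the stable triviality of $TS^k$ collapses the Whitney formula to $\langle p_j(M_f),[S_x\times S^1]\rangle=\langle p_j(E_x),[S_x\times S^1]\rangle$. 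As $p_j(\nu_x)\in H^{4j}(S^k)=0$, the class $p_j(E_x)$ is pulled back from $S^k\wedge S^1=S^{4j}$, on which $E_x$ is classified by the clutching element $\gamma\in\pi_{k}(SO_{k})$ of $df|_{\nu_x}$, whose once-stabilisation $S\gamma$ is precisely $\chi([f])(x)\in S\pi_k(SO_k)$.

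Finally I would compute $\langle p_j(E_x),[S^{4j}]\rangle$ as a function of $\gamma$. It depends only on the stable class $S^\infty\gamma\in\pi_{k}(SO)\cong\mathbb{Z}$, and the classical value of $p_j$ on the generator of $\pi_{4j}(BSO)$ gives $\langle p_j(E_x),[S^{4j}]\rangle=\pm a_j(2j-1)!\,\langle S^\infty\gamma\rangle$, with $a_j=2$ for $j$ odd and $1$ for $j$ even, i.e. $a_j=(3-(-1)^j)/2$. The main obstacle is the bookkeeping of stabilisations that produces $c_j$: the value $\chi([f])(x)=S\gamma$ lives in $S\pi_k(SO_k)\cong\mathbb{Z}$, and a generator of this group maps to $c_j$ times a generator of $\pi_k(SO)\cong\mathbb{Z}$, because the image of $\pi_{4j-1}(SO_{4j-1})\to\pi_{4j-1}(SO)$ has index $c_j$ and further stabilisation preserves this image. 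Substituting $\langle S^\infty\gamma\rangle=c_j\,\chi([f])(x)$, read in the generator of $S\pi_k(SO_k)$, gives $\langle p([f]),x\rangle=\pm a_j c_j(2j-1)!\,\chi([f])(x)$ on each generator, hence the asserted equality of homomorphisms. Checking that the passage from the once-stabilised group to the stable group multiplies by exactly $c_j$ is the delicate point, and is the source of the correction to the constant in Kreck's original lemma.
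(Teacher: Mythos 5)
Your proposal is correct and in substance takes the same route as the paper: the paper's proof of this lemma is essentially a citation, observing that the statement is Kreck's Lemma 2 restated for $\pi_0{\rm SAut}(M)$ and corrected by the factor $c_j$, with ``Kreck's proof of his Lemma 2'' supplying the argument, and your sketch (mapping torus, Wang/collapse identification $c^{*}$, reduction to the clutching bundle $E_x$ over $S^{4j}$, and Bott's value $\pm a_j(2j-1)!$ of $p_j$ on the generator of $\pi_{4j}(BSO)$) is exactly that computation written out. In particular your final paragraph isolates precisely the point the paper emphasises — that $S\pi_{4j-1}(SO_{4j-1})\cong\mathbb{Z}$ maps onto an index-$c_j$ subgroup of $\pi_{4j-1}(SO)$, so identifying the two groups with $\mathbb{Z}$ costs the factor $c_j$ omitted in \cite{Kre}.
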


\begin{proof}
This is Lemma 2 of \cite{Kre} restated for ${\rm SAut}(M)$ as opposed to $\pi_0{\rm SDiff}(M)$, the analogous subgroup of $\pi_0{\rm Diff}(M)$, and corrected to include the factor $c_j$ which was omitted in \cite{Kre}.   There $S\pi_{4j-1}(SO_{4j-1})$ is identified with $\mathbb{Z}$ and then with $\pi_{4j-1}(SO)$: a move which is valid only if $c_j = 1$.  Kreck's proof of his Lemma 2 gives a correct proof of this lemma.
\end{proof}
%
%
%
In \cite{Kry1}[Ch.$\, 3$] Krylov extended the definition of $p$ to all of ${\rm Aut}(M)$ and in Theorem 3.2 of \cite{Kry1} he used the extended function $p$ and \cite{Kre}[Lemma 2] to construct a splitting of $(\ref{Kseqeqn1})$.  This elegant argument works perfectly when $c_{j} = 1$ and so for $k \neq 3, 7$ $(\ref{Kseqeqn1})$ splits.  However, when $k = 3, 7$ the existence of $p$ is not enough to split $(\ref{Kseqeqn1})$ in general as we prove in Section \ref{topsec}.

We next turn to the homotopy category and results of \cite{B}.  Since $M$ is $(k-1)$-connected there is a homotopy equivalence
\[ M \simeq M^\bullet \cup_h D^{2k}\]
where $M^\bullet = M - {\rm int}(D^{2k})$ retracts to a $k$-skeleton of $M$, $M^\bullet \simeq \vee S^k$ and $h: S^{2k-1} \rightarrow M^\bullet$ is the attaching map for the top cell of $M$.  

Recall that $\mathcal{E}(M)$\footnote{Note that as we restrict our attention to orientation preserving maps we write $\mathcal{E}(M)$ for what Baues calls $\mathcal{E}_{+}(M)$.} denotes the group of homotopy classes of orientation preserving homotopy equivalences of $M$. The fundamental extension for $\mathcal{E}(M)$ is the short exact sequence
\begin{equation} \label{fundexteqn}
0 \dlra{} \mathcal{E}(M \, | \, M^\bullet) \dlra{} \mathcal{E}(M) \dlra{} \mathcal{E}(M^\bullet, h) \dlra{} 0
\end{equation}
where $\mathcal{E}(M \, | \, M^\bullet)$ is the subgroup of homotopy classes containing representatives which are the identity on $M^\bullet$, $\mathcal{E}(M^\bullet, h)$ is the group of homotopy classes of homotopy equivalences of $M^\bullet$ compatible with the attaching map $h$ and $\mathcal{E}(M \, | \, M^\bullet)$ is an $\mathcal{E}(M^\bullet, h)$-module whose underlying group is finite abelian by \cite{B}[Theorem$\, 1.4$].  The lower exact sequence of $( \ref{exteqn} )$ is the fundamental extension for $M_r$ since by \cite{B}[Theorem$\, 7.6$]
\[ \mathcal{E}(M_r^\bullet, h) \cong {\rm Aut}(\phi_r) \text{~~and~~} \mathcal{E}(M_r \, | \, M_r^\bullet) \cong H \otimes S\pi_{2p}(S^p)\]
and moreover $S : \pi_{2p}(S^p) \to \pi_{2p+1}(S^{p+1})$ is an injection. 

The elements of $\mathcal{E}(M \, | \, M^\bullet)$ can be realised by so-called ``pinch maps": given an element $v \in H_k(M) \cong \pi_k(M) $ and $\gamma \in \pi_{2k}(S^k)$ we define the self-homotopy equivalence 
\[ f(v, \gamma) : =  ({\rm Id}_M \vee v (\circ \gamma)) \circ p : M \simeq M \]
where $p : M \to M \vee S^{2k}$ is the map which contracts $\partial D^{2k}$ to a point and we use $\gamma$ and $v$ to denote representatives of the homotopy classes they define.  It is not hard to see that 
\[ \mathcal{E}([f(v, S\beta)]) = [f(v, JS\beta)] \in \mathcal{E}(M) \] 
where $J : S\pi_{k}(SO_{k}) \rightarrow S\pi_{2k}(S^{k})$ is the $J$-homomorphism.  This observation leads directly to that part of \cite{B}[Theorem$\, 10.3$] which identifies the composition 
\[\mathcal{E}|_{{\rm SAut}(M_r)} \circ \chi^{-1}: {\rm Hom}(H_p(M_r), S\pi_p(SO_p)) \to H_p(M_r) \otimes S\pi_{2p}(S^p)  \]
as the composition of Poincar\'{e} duality ${\rm PD} : {\rm Hom}(H, S\pi_k(SO_k)) \rightarrow H \otimes S\pi_k(SO_k)$ with the $J$-homomorphism applied to the second factor: ${\rm Id} \otimes J: H \otimes S\pi_k(SO_k)) \rightarrow H \otimes S\pi_{2k}(S^{k})$.  In particular for $M = M_r$, the $J$-homomorphism is isomorphic to reduction mod $c$ and so $\mathcal{E} : {\rm Aut}(M_r) \to \mathcal{E}(M_r)$ is surjective.

We now discuss how the existence of maps of Hopf invariant one effects Baues' results on splitting in the fundamental extension.  Call $M$ $\Sigma_1$-reducible if the suspension $\Sigma M$ is homotopic to $S^{2k+1} \vee \Sigma M^\bullet$.  For example $M_r^{2k} = \sharp_r(S^k \times S^k)$ is $\Sigma_1$-reducible with 
\[ \mathcal{E}(M_r^{2k} \, | \, (M_r^{2k})^{\bullet}) \cong S\pi_{2k}(\vee_{2r} S^k).\]  
Theorem 5.2 of \cite{B} gives a criterion for when the fundamental extension of $M$ splits: one requires that the inclusion $S\pi_{2k}(\vee_{2r} S^k) \subset \pi_{2k+1}(\vee_{2r} S^{k+1})$ admits a retraction of $\mathcal{E}((M_r^{2k})^\bullet, h)$-modules.  If $k \geq 3$ is odd and $k \neq 3, 7$ this criterion holds but for $M_r$ it fails: the inclusion $\pi_{2p}(\vee_{2r} S^p) \rightarrow \pi_{2p+1}(\vee_{2r} S^{p+1})$ is isomorphic to $\oplus_{2r} \mathbb{Z}_d \hra \oplus_{2r}(\mathbb{Z} \oplus \mathbb{Z}_d)$ where $d = 12, 120$ if $p=3, 7$ and there is a retraction as abelian groups but not of $\mathcal{E}(M_r^\bullet, h)$-modules.  The point being that $-{\rm Id}$ acts by $-1$ on $\mathbb{Z}_d  \subset \pi_{2p+1}(S^p)$ but sends $(1, 0) \in \pi_{2p+1}(S^p)$ to $(1, -1)$ where $(1, 0)$ is the Hopf map and $(0, 1)$ is a suitable generator of $S\pi_{2p}(S^p)$: see \cite{B}[\S 9].  The case of $M_1 = S^p \times S^p$ shows that the splitting criterion of \cite{B}[Theorem$\, 5.2$] is not a necessary condition for splitting.

We conclude the introduction by correcting the statements in \cite{B} pertaining to splitting the fundamental extension for $M_r$.  Note that since every $M_r$ has an orientation reversing map of order two we may confine our attention to $\mathcal{E}(M_r)$.  Theorem 6.3 of \cite{B} computes $\mathcal{E}(S^k \times S^k)$ and is correct as stated but the proof of splitting given fails for $k = 3, 7$ in which case a correct proof is given in Corollary \ref{maincor} below and can also be found in \cite{Kry2} for $k = 3$.   The final sentence of  \cite{B}[Theorem$\, 8.14$] should be altered to exclude the dimensions $k = 3, 7$ and the correct statement is found in Corollary \ref{maincor} since for $k=3$ or $7$ the only $\Sigma_1$-reducible $(k-1)$-connected Poincar\'{e} complexes of dimension $2k$ are homotopy equivalent to $M_r$ for some $r$.

\noindent
{\bf Acknowledgements:} I would like to thank Nikolai Krylov for helpful discussions.  I would also like to thank the referee for comments which clarified a number of mathematical points and which significantly improved the organisation and presentation of our ideas.

\section{$1$-cocycles, quadratic forms and group extensions} \label{algsec}
Let $S$ be a finitely generated group and let $K$ be an abelian group equipped with an $S$-action, inducing an $S$-module structure on $K$. In this section we consider certain group extensions of $S$ by $K$:
\[0 \to K \to \Gamma \to S \to 1\]
related to a given short exact sequence $0 \to K \to L \to M \to 0$ of $S$-modules. The group extensions of interest are subextensions of the trivial extension $L \rtimes S \to S$ and so are defined by $1$-cocyles on $S$ with values in $M$ as we now explain.  

Fix an $S$-module $X$; i.e. an action of $S$ on an abelian group $X$.  A $1$-cocycle is a map $s:S \rightarrow X$ satisfying
\begin{equation} \label{eqn:1-cocy-prop}
s(AB) = s(A) \cdot B + s(B) ~~\forall \, A, B \in S.
\end{equation}
Note that necessarily $s({\rm Id}) = 0 \in X$ where ${\rm Id} \in S$ is the identity.
\begin{Definition} 
The set of $1$-cocycles, ${\rm Z^1}(S; X)$, is an abelian group under
pointwise addition.  Every $x \in X$ defines a $1$-cocycle by
$$s(x)(A) := x \cdot A - x$$
and the set of such $1$-cocycles forms a subgroup ${\rm B}^1(S; X) \subset {\rm Z^1}(S; X)$.  The first cohomology group of $S$ with coefficients in $X$ is by definition
\[H^1(S; X) : = {\rm Z^1}(S; X)/{\rm B^1}(S; X).\]
\end{Definition}

Using the action of $S$ on $X$ we obtain the usual semi-direct product $X \rtimes S$ with multiplication law
\[(x, A) \cdot (y, B) = (x + y \cdot A, AB).\]
Returning to the short exact sequence $K \to L \to M$ of $S$-modules we have
\begin{Definition}[{Cf.\,\cite{Br}[Ex.\,2, p.\,95]}] \label{gammasdef}
Given a $1$-cocycle $s \in Z^1(S; M)$ and writing $\bar x$ for the image of $x \in L$ in $M$, we define $\Gamma(s)$, a subgroup of $L \rtimes S$, by 
\[ \Gamma(s) : = \{ (x, A) \, | \, \bar x = s(A) \in M \} \subset L \rtimes S.\]
\end{Definition}

Note that $(\ref{eqn:1-cocy-prop})$ ensures that $\Gamma(s)$ is indeed a subgroup.  Note also that by construction the projection map $\Gamma(s) \to S$, $(x, A) \mapsto A,$ fits into the following commutative diagram of extensions of $S$.
\[ \begin{diagram}
\divide\dgARROWLENGTH by 2
\node{0} \arrow{e} \node{K} \arrow{e} \arrow{s,J} \node{\Gamma(s)} \arrow{e} \arrow{s} \node{S} \arrow{s,r}{=} \arrow{e} \node{1}\\
\node{0} \arrow{e} \node{L} \arrow{e} \node{L \rtimes S} \arrow{e} \node{S} \arrow{e} \node{1}
\end{diagram} \]

Recall that two extensions $\Gamma_1 \to S$ and $\Gamma_2 \to S$ are isomorphic is there is an isomorphism $\Gamma_1 \cong \Gamma_2$ commuting with the homomorphisms to $S$.  The following proposition summarises basic results from the theory of abelian group extensions which we need to analyse the extension $\Gamma(s) \to S$.  
\begin{Proposition} [{Cf.\,\cite{Br}[\S 3 Ch.$\,\text{IV}$]}] \label{prop:Brown+}
Let Ê$0 \to K \to L \to M \to 0$ be a short exact sequence of $S$-modules.
\begin{enumerate}
\item[\em{(i)}] Group extensions $K \to \Gamma \to S$ are classified up to isomorphism by $H^2(S; K)$.
\item[\em{(ii)}] Associated to the short exact sequence $K \to L \to M$ of $S$-modules there is a long exact sequence in cohomology
\[ \dots \dlra{} H^1(S; L) \dlra{} H^1(S; M) \dlra{\delta} H^2(S; K) \dlra{} H^2(S; L) \to \dots ~ .\]
\item[\em{(iii)}]The isomorphism class of the the extension $\Gamma(s) \to S$ in Definition \ref{gammasdef} is determined by $[\Gamma(s)] = \delta([s]) \in H^2(S; K)$.  In particular two extensions $\Gamma(s) \to S$ and $\Gamma(s') \to S$ are isomorphic if $[s] = [s'] \in H^1(S; M)$.
\item[\em{(iv)}] Given an extension $K \to \Gamma \to S$ let $\Gamma$ act on $K$ via the projection $\pi: \Gamma \to S$.  This extension is isomorphic to $K \to \Gamma(s) \to S$ for some $s \in H^1(Z; X_2)$ if and only if there is a $1$-cocycle $d : \Gamma \to M$ such that $d|_K: K \cong K$ is an isomorphism of $S$-modules.  In this case $\Gamma \cong \Gamma(\bar d)$ where $\bar d: S \to M$ is induced by $d$.
\end{enumerate}
\end{Proposition}

\begin{proof}
Part $\text{(i)}$ is \cite{Br}[Theorem 3.2, Ch.$\,\text{IV}$], $\text{(ii)}$ is well-known and $\text{(iii)}$ is found in \cite{Br}[Ex.$\, 2$, Ch.$\,\text{IV}$, \S 3, p.\,95].  For the `if' of $\text{(iv)}$ observe that the map $d \times \pi : \to L \rtimes S$ defines an isomorphism onto $\Gamma(\bar d)$.  For the `only if' part of $\text{(iv)}$, define $d : \Gamma(s) \to L$ to be the projection $(x, A) \mapsto x$.
\end{proof}

We next consider examples relevant to ${\rm Aut}(M_r)$.  Let $H$ be a free abelian group of rank $2r$, let $\phi_r : H \times H \rightarrow \mathbb{Z}$ be a nonsingular, anti-symmetric bilinear form and let $S = {\rm Aut}(\phi_r)$ be the group of automorphisms of $\phi_r$: $S$ is thus the symplectic group $S \cong {\rm Sp}(2r, \mathbb{Z})$.  Let $H^* = {\rm Hom}(H, \mathbb{Z})$ be the dual of $H$ where ${\rm Aut}(\phi)$ acts on the right of $H^*$ by composition:
\[ H^* \times {\rm Aut}(\phi_r) \longrightarrow H^*, ~~~ (x, A) \longmapsto x \cdot A := x \circ A.\]

Now let $c$ be a positive integer or infinity and denote by $H^*_c := H^*/cH^*$, $H^*_\infty := H^*$.   There are obvious induced actions of $S$ on $H^*_c$ and a short exact sequence of $S$-modules
\begin{equation} \label{eqn:Hstarext}
 0 \dlra{} H^*_c \dlra{\iota} H^*_{2c} \dlra{\rho} H^*_2 \dlra{} 0
 \end{equation}
where $\iota$ is multiplication by $2$ interpreted on generators.
\begin{Definition}
 The semi-direct product $H_c^* \rtimes {\rm Aut}(\phi_r)$ defined by the above action is called the {\em Jacobi group} and denoted by $\Gamma(\phi_r, c)$.
\end{Definition}
Recall that a quadratic refinement of the hyperbolic form $\phi_r: H \times H
\rightarrow \mathbb{Z}$ is a function $\psi: H \rightarrow \mathbb{Z}_2$ such that
\[\psi(v + w) = \psi(v) + \psi(w) + \bar \phi_r(v,w) \]
where $\bar \phi_r$ is the mod $2$ reduction of $\phi$.  Let $\Psi$ be the set of all such $\psi$ refining $\phi$.  The symplectic group ${\rm Aut}(\phi_r)$ acts on $\Psi$ by $\psi \cdot A = \psi \circ A^{}$ and we shall use this action to construct a $1$-cocyle on ${\rm Aut}(\phi_r)$ with values in $H^*_2$.
\begin{Lemma} \label{keylem}
\begin{enumerate}
\item[\em{(i)}]
For all $\psi_0, \, \psi_1 \in \Psi$ the function $\psi_1 - \psi_0$ belongs to $H^*_2$ and moreover $H^*_2$ acts freely and transitively on $\Psi$ by addition of functions.
\item[\em{(ii)}]The orbits of the action of ${\rm Aut}(\phi_r)$ on $\Psi$ are the isomorphism classes of
quadratic refinements of $\phi$.  For each $r$ there are two orbits which are detected by
the Arf invariant, $\Psi = \Psi_0 \sqcup \Psi_1, |\Psi_0| = 2^{2r-1}+2^{r-1}, 
|\Psi_1| = 2^{2r-1} - 2^{r-1}$.
\item[\em{(iii)}]Fixing $\psi \in \Psi$ defines a $1$-cocycle $s(\psi): {\rm Aut}(\phi_r) \rightarrow H^*_2$ where
\[ s(\psi)(A) := \psi \cdot A - \psi. \]
\item[\em{(iv)}]For all $\bar{x} \in H^*_2, ~s(\psi + \bar{x}) = s(\psi) + s(\bar{x}).$
\end{enumerate}
\end{Lemma}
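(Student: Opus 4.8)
The plan is to dispatch parts (i), (iii) and (iv) by direct computation and to concentrate the work on part (ii).

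For (i), I would compute $(\psi_1 - \psi_0)(v+w)$ by applying the refinement identity to each $\psi_i$ separately; the two copies of $\bar\phi(v,w)$ cancel, leaving $(\psi_1 - \psi_0)(v+w) = (\psi_1-\psi_0)(v) + (\psi_1-\psi_0)(w)$. Thus $\psi_1 - \psi_0$ is a homomorphism $H \to \mathbb{Z}_2$, which under the standard identification ${\rm Hom}(H, \mathbb{Z}_2) \cong H^* \otimes \mathbb{Z}_2 = H^*_2$ is an element of $H^*_2$. The same cancellation shows that $\psi + \bar x$ refines $\phi$ for every $\bar x \in H^*_2$, so the additive action is well defined; freeness is immediate, and transitivity is precisely the first assertion applied to $\bar x = \psi_1 - \psi_0$. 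In particular $|\Psi| = |H^*_2| = 2^{2r}$.

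For (iii), the key observation is that $A \in {\rm Aut}(\phi)$ preserves $\bar\phi$, so $\psi \circ A$ again lies in $\Psi$ and hence $s(\psi)(A) = \psi \cdot A - \psi$ lies in $H^*_2$ by (i). The cocycle identity then follows from a one-line telescoping calculation: since $\psi \cdot (AB) = (\psi \cdot A) \cdot B$ and the action on $H^*_2$ is $\bar x \cdot B = \bar x \circ B$, expanding $(\psi \cdot A - \psi) \cdot B + (\psi \cdot B - \psi)$ collapses to $\psi \cdot (AB) - \psi$. For (iv) I would expand $s(\psi + \bar x)(A) = (\psi + \bar x) \cdot A - (\psi + \bar x)$ and use that precomposition by $A$ is additive on functions, which separates the expression immediately into $s(\psi)(A) + s(\bar x)(A)$.

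The substance lies in (ii). That the ${\rm Aut}(\phi)$-orbits coincide with the isomorphism classes of refinements is essentially the definition of isomorphism, once the action is known (from (iii)) to be well defined. The genuine input is the classical theorem of Arf: over $\mathbb{Z}_2$ a quadratic refinement of a nonsingular form is determined up to isomorphism by its Arf invariant, and for the hyperbolic form both values $0$ and $1$ are realised; this yields exactly two orbits $\Psi_0, \Psi_1$. To compute their sizes I would fix a symplectic basis $e_1, f_1, \dots, e_r, f_r$ of $H$; the refinement identity shows $\psi$ is determined by the values $\psi(e_i), \psi(f_i) \in \mathbb{Z}_2$, and in this basis the Arf invariant is $\sum_{i=1}^r \psi(e_i)\psi(f_i)$. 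Counting the $2^{2r}$ tuples according to the parity of this sum via the elementary identity $\tfrac12\big((3+1)^r \pm (3-1)^r\big)$ (each of the four values of a pair $(\psi(e_i), \psi(f_i))$ contributes a product equal to $1$ only for the single choice $(1,1)$) gives $|\Psi_0| = 2^{2r-1} + 2^{r-1}$ and $|\Psi_1| = 2^{2r-1} - 2^{r-1}$, consistent with $|\Psi| = 2^{2r}$.

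I expect the only real obstacle to be (ii): parts (i), (iii) and (iv) are formal cancellation arguments, whereas (ii) rests on the non-formal classification of quadratic refinements (Arf's theorem) together with pinning down the Arf-invariant formula with respect to a symplectic basis. Once that formula is in hand, the cardinality computation is a short counting argument.
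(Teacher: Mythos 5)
Your proposal is correct and takes essentially the same approach as the paper: the telescoping computation you give for (iii) is exactly the calculation the paper displays, and (i) and (iv) are the same formal cancellation arguments. For (ii) the paper simply declares the statements well known and omits the proof, so your filled-in argument---Arf's classification theorem together with the symplectic-basis count $\tfrac12\bigl(4^r \pm 2^r\bigr) = 2^{2r-1} \pm 2^{r-1}$---is a correct supply of exactly the standard facts the paper is citing.
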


\begin{proof}
The first two statements are well known so we omit their proof as well as the definition of the
Arf invariant.  The third and fourth statements statement are easily checked.  For example:
\[ \begin{array}{cl}
s(\psi)(AB) & = \psi \cdot (AB) - \psi \\
& = (\psi \cdot A - \psi) \cdot B + \psi \cdot B - \psi\\
& = s(\psi)(A) \cdot B + s(\psi)(B).
\end{array} \]
\end{proof}

Given a quadratic form $\psi$ we write $\Gamma(\psi, c)$ for $\Gamma(s(\psi))$ where $s(\psi)$ is the $1$-cocycle of Lemma \ref{keylem} and $\Gamma(s(\psi))$ is defined as in Definition \ref{gammasdef} using the short exact sequence $(\ref{eqn:Hstarext})$.  Thus there is an extension
\begin{equation} \label{gammaexteqn}
0 \dlra{} H^*_c \dlra{} \Gamma(\psi, c) \dlra{\pi} {\rm Aut}(\phi_r) \dlra{} 1
\end{equation}
determined by $\delta_c[s(\psi)] \in H^2(S; H^*_c)$ where $\delta_c$ is the boundary map $H^1(S; H_c^*) \to H^2(S; H^*_2)$.  
The $1$-cocycles $s(\psi)$ have the following key properties

\begin{Lemma} \label{lem:s(psi)}
If $c = 2d$ or $\infty$ then for all quadratic forms $\psi : H \to \mathbb{Z}_2$ refining $\phi_r$,
\begin{enumerate}
\item[\em{(i)}]
$[s(\psi)] = 0 \in H^1(S; H_2^*)$ if and only if $r = 1$,  
\item[\em{(ii)}]
$\delta_c([s(\psi)])= 0 \in H^2({\rm Aut}(\phi_r); H^*_c)$ if and only if $[s(\psi)] = 0$.
\end{enumerate}
\end{Lemma}
\noindent
As immediate consequences we have the following two statements.

\begin{Proposition} \label{keyextprop}
If $c = 2d$ or $\infty$ then the extension $\Gamma(\psi, c) \dlra{\pi}  {\rm Aut}(\phi_r)$ in $(\ref{gammaexteqn})$ splits if and only if $r = 1$.
\end{Proposition}

\begin{proof}
By Proposition \ref{prop:Brown+} $\text{(iii)}$ this extension splits if and only if $\delta_c([s(\psi)]) = 0$ which by Lemma \ref{lem:s(psi)} happens if and only if $r =1$.
\end{proof}

\begin{Corollary} \label{extcor}
Identify the groups ${\rm Aut}(\phi_r) = Sp(2r, \mathbb{Z})$ and $H^*_c = (\mathbb{Z}/c)^{2r}$ with the standard $Sp(2r, \mathbb{Z})$ action.  If $c = 2d$ or $\infty$ then for all $r \geq 2$, the class $\delta_c([(s(\psi)])$ generates a $\mathbb{Z}/2$-summand of $H^2(Sp(2r,\mathbb{Z}); (\mathbb{Z}/c)^{2r})$.
\end{Corollary}

\begin{proof}[Proof of Lemma \ref{lem:s(psi)}]
To prove $\text{(i)}$  note that by Lemma \ref{keylem} $\text{(iv)}$, $[s(\psi)] = 0$ if and only if there exists $\bar x \in H^*_2$ such that $s(\psi + \bar x)$ is identically zero.  But for any $\chi \in \Psi$, $s(\chi)$ is identically zero if and only if the orbit $\chi \cdot {\rm Aut}(\phi)$ contains precisely one element.  By Lemma \ref{keylem} $\text{(ii)}$ this happens if and only if $\chi$ is the Arf invariant $1$ quadratic refinement and $r=1$.  

To prove $\text{(ii)}$ it is enough by Proposition \ref{prop:Brown+} $\text{(ii)}$ to prove that $[s(\psi)] = 0$ if and only if $[s(\psi)]$ lies in the image of  $\rho : H^1({\rm Aut}(\phi), H^*_{2c}) \rightarrow H^1({\rm Aut}(\phi), H^*_2)$.  Hence we assume that $s_{2c} \in Z^1({\rm Aut}(\phi), H^*_{2c})$ is such that $\rho[s_{2c}] =  [s(\psi)]$.  First observe that for any quadratic form $\chi \in \Psi$ refining $\phi_r$ and any $v \in H$
\[0 = \chi(0) = \chi(v -v) = \chi(v) + \chi(-v) + \bar \phi_r(v, -v) = \chi(v) + \chi(-v) ~ \in ~ \mathbb{Z}_2.\]
Hence $-{\rm Id}$ is an automorphism of every $\chi$ and $s(\chi)(-{\rm Id}) = 0$ for all $\chi \in \Psi$.  Since $s_{2c}$ maps to some $s(\chi)$ under $\rho$, $s_{2c}(-{\rm Id}) = 2 x$ for some $x \in H^*_{2c}$.  But for any $1$-cocycle $s$ with values in $H^*_{2c}$ and for any $A \in S$,
\[ s(-A) = -s(A) + s(-{\rm Id})~~~\text{and}~~~s(-A) = s(-{\rm Id}) \cdot A + s(A) \]
and so $2s(A)  =  - (s(-{\rm Id}) \cdot A - s(-{\rm Id}))$.  Hence $2(s_{2c}(A) + s(x)(A)) = 0$.  It follows that $s_{2c}(A) + s(x)(A) \in \iota(H^*_c)$: recall that $c = 2d$ or $\infty$.  Thus $[s_{2c}] = \iota_*[s_{c}]$ for some $[s_{c}] \in H^1({\rm Aut}(\phi), H^*_c)$ and so $[s(\psi)] = \rho_*(\iota_*([s_{c}])) = 0$.
\end{proof}

\section{The global derivative and ${\rm Aut}(M_r)$} \label{topsec}
Recall that $M_r = \sharp_r(S^p \times S^p)$ for $p = 3, 7$.  For brevity we shall write $H$ for $H_p(M_r)$ 
so that the top extension of $(\ref{exteqn})$ is written
\[ 0 \longrightarrow H \dlra{} {\rm Aut}(M_r) \dlra{\alpha} {\rm Aut}(\phi_r) \longrightarrow 1. \]

For every manifold $M_r$ there there are stable framings of $\tau(M_r)$, the tangent bundle of $M_r$, which are bundle isomorphisms $F: \tau(M_r) \oplus \varepsilon(M_r) \cong \varepsilon^{2p+1}(M_r)$ where $\varepsilon^k(M_r) = M_r \times \mathbb{R}^k$ is the trivial rank $k$ vector bundle over $M_r$.  We begin this section by showing how a stable framing of the tangent bundle of $M_r$ defines a $1$-cocycle 
\[ s_F : {\rm Aut}(M_r) \rightarrow H^*\]  
where we recall that $H^* = {\rm Hom}(H; \mathbb{Z})$.  As we mentioned in Subsection \ref{KBsubsec}, this idea goes back to \cite{Kry2}[Ch.$\, 3$].  We take a different approach from \cite{Kry2} using what we call the global derivative.

For a bundle $E$ over $M_r$ we write $E_m$ for the fibre of $E$ over $m$.  Fix a stable framing $F$ of $M_r$ and for each $m \in M_r$ let $F_m$ be the isomorphism $(\tau(M_r) \oplus \varepsilon(M_r))_m \cong \mathbb{R}^{2p+1}$ defined by $F$.  Given a diffeomorphism $f:M_r \cong M_r$ let $D(f) : \tau(M_r) \cong \tau(M_r)_{}$ denote the derivative of $f$ with $D_m(f) : \tau(M_r)_m \cong \tau(M_r)_{f(m)}$.  Now for each $m \in M_r$ define $s_F(m,f) \in GL_{2p+1}(\mathbb{R})$, by 
\[s_F(m,f) := F_{f(m)}^{} \circ (D_m(f) \oplus {\rm Id}) \circ (F_{m})^{-1}: \mathbb{R}^{2p+1} \cong \mathbb{R}^{2p+1}.\]
Choosing a retraction from $GL_{2p+1}(\mathbb{R})$ to $SO(2p+1)$ and composing with the inclusion $SO(2p+1) \hra SO$, we obtain a continuous map $s_F(f): M \rightarrow SO$.  Evidently, the homotopy class of $s_F(f)$ is independent of the isotopy class of $f$ and so we obtain a map $s_F: \pi_0{\rm Diff}(M_r) \rightarrow [M_r,SO]$ called the global derivative.  Elementary obstruction theory shows that $[M_r, SO] \cong [M^\bullet_r, SO] \cong H^p(M_r;\pi_p(SO)) = H^*$.  If $f$ is the identity outside a $2p$-disc then $s_F([f])$ will be constant over a $p$-skeleton of $M$ and hence $s_F$ descends to a map $s_F:{\rm Aut}(M_r) \rightarrow H^*$.  Note that the set of homotopy classes $[M_r, SO]$ is a group under pointwise composition in $SO$ and that ${\rm Aut}(M_r)$ acts on the right of $[M_r, SO]$ by pre-composition of functions.  Moreover there is a canonical isomorphism $[M_r, SO] \cong H^*$ preserving ${\rm Aut}(M_r)$ actions so that for $x \in [M_r, SO] \cong H^*$ and $[g] \in {\rm Aut}(M_r)$, $x \circ [g] = x \cdot \alpha([g])$.

\begin{Proposition} \label{s_Fprop1}
The map $s_F:{\rm Aut}(M_r) \rightarrow H^*$ is a $1$-cocycle with $s_F|_{H} : H \cong 2H^*$.
\end{Proposition}

\begin{proof}
Let $f,g:M_r \cong M_r$ be diffeomorphisms.  Applying the chain rule at $m \in M_r$ we have
\[ \begin{array}{cl}
s_F(m,f \circ g) &= F_{f(g(m))}^{} \circ (D_m(f \circ g) \oplus {\rm Id}) \circ (F_{m})^{-1} \\
& = F_{f(g(m))} \circ (D_{g(m)}(f) \oplus {\rm Id}) \circ (D_{m}(g) \oplus {\rm Id}) \circ (F_{m})^{-1} \\
& = F_{f(g(m))} \circ (D_{g(m)}(f) \oplus {\rm Id}) \circ (F_{g(m)})^{-1} \circ F_{g(m)}^{} \circ (D_{m}(g) \oplus {\rm Id}) \circ (F_{m})^{-1} \\
& =  s_F(g(m),f) \circ s_F(m,g).
\end{array}\]
As the group structure on $[M_r, SO] \cong H^*$ is point-wise composition we have
\[s_F([f \circ g]) = s_F([f]) \circ [g] + s_F([g]) = s_F([f]) \cdot \alpha([g[)  + s_F([g]).  \]

For the second statement, note that $s_F$ is a homomorphism on the subgroup $H$ and so we may check this statement for a basis of $H$.  We do this now by choosing Dehn twists to represent a basis of $H$.  For $i  = 1, \dots, r$, choose embeddings $U_i, V_i : S^p \times D^p \to M_r$ representing a symplectic basis $\{ u_1, v_1, \dots, u_{r}, v_r \}$ for $H$ and choose an appropriate smooth map $(D^p, S^{p-1}) \rightarrow (SO_{p+1}, {\rm Id})$ representing a generator $\gamma$ of $S\pi_p(SO_p)$ so that $f(U_i, A)$ and $f(V_i, A)$ as defined in $(\ref{def:f-v-beta})$ in Section \ref{KBsubsec} are diffeomorphisms.  Writing $[f(u_i, \gamma)]$ for the mapping class of $f(U_i, A)$ and similarly for $[f(v_i, \gamma)]$ and also letting $\delta_{ij}$ denote the Kronecker delta, it is easy to check the following equalities: 
\[ s_F([f(u_j, \gamma)])(u_i) = 0 ~~~\text{and}~~~s_F([f(u_j, \gamma)])(v_i) = \delta_{ij} S\gamma \in \pi_p(SO).\] 
Hence $s_F([f(u_j, \gamma)])$ is the basis element of $2H^*$ defined by $\chi([f(u_j, \gamma)])$ where $\chi$ is the isomorphism of $(\ref{Kseqeqn2})$ again from Section \ref{KBsubsec}.  The same argument holds also for the basis vectors $v_j$ and we are done.
\end{proof}

Recall from Section \ref{algsec} that $\Gamma(\phi_r, \infty) = H^* \rtimes {\rm Aut}(\phi_r)$ is the Jacobi group of $\phi_r$.  As an immediate consequence of Proposition \ref{prop:Brown+} $\text{(iv)}$ and its proof we obtain

\begin{Corollary} \label{s_FxH_pcor}
Together $s_F$ and $\alpha$
define an injective homomorphism 
\[
\begin{array}{ccccc}
s_F \times \alpha : & {\rm Aut}(M_r) & \longrightarrow & \Gamma(\phi_r, \infty).\\
\end{array}\]
In particular ${\rm Aut}(M_r) \cong \Gamma(\psi_F, \infty)$ as defined in $(\ref{gammaexteqn})$ and is determined up to isomorphism by $[\bar s_F] \in H^1({\rm Aut}(\phi_r); H^*_2)$ where $\bar s_F$ is the $H^*_2$-valued $1$-cocyle on ${\rm Aut}(\phi_r)$ induced by $s_F$.
\end{Corollary}

It remains to determine $\bar s_F \in Z^1({\rm Aut}(\phi_r); H^*_2)$ and for this we use a small piece of the apparatus of surgery.  As well as $s_F$, a stable framing $F$ on $M_r$ defines a bundle map $(\bar c, c) : \tau(M_r) \oplus \varepsilon(M_r) \rightarrow \varepsilon^{2p+1}(S^{2p})$ to the trivial bundle over $S^{2p}$ which covers the collapse map $c: M_r \rightarrow S^{2p}$.  The map $(\bar c, c)$ is called a degree one normal map with respect to the tangent bundle in \cite{Lu}[Definition 3.50] and in \cite{Lu}[\S 4.4] it is proven that $(\bar c, c)$ equips $\pi_p(M_r) \cong H_p(M_r) = H$ with a quadratic form  $\psi(\bar c, c) : H \rightarrow \mathbb{Z}_2$ which we shall denote by $\psi_F$.  The quadratic form $\psi_F$ refines $\phi_r : H \times H \rightarrow \mathbb{Z}$ and is defined as follows: the choice of framing $F$ defines a unique regular homotopy class of immersions $V : S^p \rightarrow M_r$ for each $v \in H$ and $\psi_F(v) \in \mathbb{Z}_2$ is the number of self-intersections of a generic representative of this regular homotopy class counted modulo $2$. 

If we modify the stable framing $F$ we may modify the quadratic form $\psi_F$ as follows: any two stable framings $F_0$ and $F_1$ differ up to homotopy by a homotopy class $x = x(F_0, F_1) \in [M_r, SO] \cong H^*$.  If $\bar x \in H^*_2$ is the mod $2$ reduction of $x$ then we have

\begin{Lemma} \label{surgdifflem}
Let $F_0$ and $F_1$ be stable framings of $M_r$ as above.  Then $\psi_{F_0} = \psi_{F_1} + \bar x$.
\end{Lemma}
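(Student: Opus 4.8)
The plan is to reduce the statement to a pointwise comparison using the algebra of Section \ref{algsec}, and then to carry out a local self-intersection computation governed by the restriction of the two framings to a representative of each homology class.

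First I would observe that, since both $\psi_{F_0}$ and $\psi_{F_1}$ are quadratic refinements of $\phi_r$, Lemma \ref{keylem}(i) already guarantees that the difference $\psi_{F_0} - \psi_{F_1}$ is \emph{some} element $y \in H^*_2$. It therefore suffices to evaluate $y$ on an arbitrary class $v \in H$ and to show $y(v) = \bar{x}(v) \in \mathbb{Z}_2$, where $\bar{x}(v)$ is the mod $2$ reduction of the integer $x(v)$ obtained by pairing $x = x(F_0, F_1) \in [M_r,SO] = {\rm Hom}(H, \pi_p(SO))$ with $v$. Since we work mod $2$, the sign ambiguity in $x(F_0,F_1)$ and the identity $-\bar{x} = \bar{x}$ in $H^*_2$ make the asserted equation $\psi_{F_0} = \psi_{F_1} + \bar{x}$ equivalent to $(\psi_{F_0} - \psi_{F_1})(v) = \bar{x}(v)$ for all $v$.

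Next I would localise the comparison to a single representative. The framing singles out a regular homotopy class inside the fixed homotopy class $v \in \pi_p(M_r)$, and to compare the classes determined by $F_0$ and $F_1$ I would fix any immersion $V : S^p \to M_r$ in the $F_0$-class and observe that the $F_1$-class is obtained from it by twisting the tangential (Gauss-map) data by the difference of the two framings along $V$. Because $V$ represents $v$, this difference is exactly $x \circ V \in [S^p, SO] = \pi_p(SO)$, i.e. the integer $x(v)$, independently of the chosen representative. It is convenient to take $V$ to be an embedding with trivial normal bundle---possible by Haefliger's theorem in the metastable range $4p \geq 3(p+1)$ together with $\pi_{p-1}(SO_p) = 0$---so that the reference class has no self-intersections and $\psi_F(v)$ is literally the mod $2$ double-point number of a generic immersion in the class singled out by $F$.

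The core of the proof is then to compute how the mod $2$ double-point number changes as the regular homotopy class is twisted by $x(v)$. By Smale--Hirsch immersion theory the $F_1$-class differs from the $F_0$-class by the element of $\pi_p(V_{2p,p})$ determined by $x(v)$, and the classical relation between the Smale invariant of an immersion $S^p \looparrowright M_r^{2p}$ and the parity of its self-intersection number shows that this shifts the double-point count by $x(v) \bmod 2$. Hence $(\psi_{F_0} - \psi_{F_1})(v) = \overline{x(v)} = \bar{x}(v)$, as required. The main obstacle is precisely this last step: pinning down, consistently with the sign and normalisation conventions used in \cite{Lu} to define $\psi_F$ (and with Wall's self-intersection form in \cite{Wa}), that one unit of twisting changes the double-point count by an odd number, i.e. that the composite $\pi_p(SO) \to \pi_p(V_{2p,p}) \to \mathbb{Z}_2$ is reduction mod $2$. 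The dimensions $p = 3, 7$ are exactly those in which the \emph{integral} Smale and double-point invariants behave exceptionally (the Hopf-invariant-one phenomenon that drives the whole paper), so I expect the delicate point to be confirming that these integral subtleties do not survive mod $2$ and that the mod $2$ change is governed solely by $x(v) \bmod 2$; tracing the geometric definition of $\psi_F$ through framed immersions, rather than through any integral refinement, should be the cleanest route to this parity statement.
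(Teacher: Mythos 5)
Your route is genuinely different from the paper's, which contains no geometry at all at this point: the paper dispatches the lemma by citation, namely to Levine \cite{Le}[Thm.$\,4.2(b)$] (stated for normal maps over the stable normal bundle) together with \cite{Lu}[Lemma 3.51] to translate between normal and tangential data. Your plan --- reduce to evaluation on each $v \in H$ via Lemma \ref{keylem}(i), observe that the two $F$-prescribed regular homotopy classes differ by the twist $x(v) \in \pi_p(SO)$, and track double-point parity through Smale--Hirsch theory --- is a sound direct proof of the same statement, and it has the virtue of exposing why the lemma has content precisely in the Hopf-invariant-one dimensions, which the citation proof hides.

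The one genuine gap is the step you yourself flag: you assert, but do not prove, that the composite $\pi_p(SO) \rightarrow \pi_p(V_{2p,p}) \rightarrow \mathbb{Z}_2$ is reduction mod $2$, saying only that you ``expect'' the integral subtleties not to survive mod $2$. As written this leaves the core of the argument unestablished; however, it closes in two lines. The fibration $SO(p) \rightarrow SO(2p) \rightarrow V_{2p,p}$ gives the exact sequence $\pi_p(SO_p) \rightarrow \pi_p(SO_{2p}) \rightarrow \pi_p(V_{2p,p}) \rightarrow \pi_{p-1}(SO_p) = 0$ for $p = 3,7$; since $\pi_p(SO_{2p}) \cong \pi_p(SO) \cong \mathbb{Z}$ and $S\pi_p(SO_p) = 2\mathbb{Z}$ (the statement $c_j = 2$, i.e.\ exactly the Hopf-invariant-one phenomenon), one gets $\pi_p(V_{2p,p}) \cong \mathbb{Z}_2$ with the stable twist mapping \emph{onto} it by mod $2$ reduction; and the double-point-parity map $\pi_p(V_{2p,p}) \rightarrow \mathbb{Z}_2$ is an isomorphism because parity is a regular homotopy invariant and the Whitney immersion with a single double point realises the nontrivial class. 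Two smaller corrections: $\pi_p(V_{2p,p})$ is $\mathbb{Z}_2$, not $\mathbb{Z}$, for $p$ odd, so there is no ``integral Smale invariant'' in play here --- the exceptional integral behaviour for $p = 3,7$ sits in the map from $\pi_p(SO)$, not in $\pi_p(V_{2p,p})$ itself; and your embedded representative (which exists since $\pi_{p-1}(SO_p) = 0$) need not lie in the $F_0$-class, but this is harmless since you only use it as a reference point of the torsor to compute a difference. Note also that your mechanism explains Remark \ref{splitrem}: for other odd $p$ with $c_j = 1$ the map $\pi_p(SO_p) \rightarrow \pi_p(SO_{2p})$ is onto, the stable twist acts trivially on the immersion class, and the framing-dependence of the quadratic form disappears into the finer invariant $\alpha_M$.
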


\begin{proof}
This is a standard, if subtle, result of surgery: see \cite{Le}[Theorem$\, 4.2 (b)$] and the discussion preceding it for a proof.  Of course Levine uses normal maps over the stable normal bundle but the bijective correspondence between these and tangential normal maps is covered in detail in \cite{Lu}[Lemma 3.51].
\end{proof}
\begin{Lemma} \label{s_Flem2} 
Let $F$ be a stable framing of $M_r$ with associated quadratic form $\psi_F$.  Then $s_F = s({\psi_F}) \circ \alpha$ {\rm mod} $2$.
\end{Lemma}

\begin{proof}
Given $[f] \in {\rm Aut}(M_r)$ recall that $\alpha([f]) = f_* : H \cong H$.  We must show for all $[f]$ and for all $v \in H$ that $\psi_F(f_*(v)) - \psi_F(v) = s_F([f])(v)$ mod $2$.  Let $V: S^p \rightarrow M_r$ be an immersion which represents $v$ and which lies in the regular homotopy class prescribed by $F$.  As $f$ is a diffeomorphism, $V$ and $f \circ V$ have the same number of double points.  By Lemma \ref{surgdifflem} and the discussion preceding it, we see that $\psi_F(f_*(v)) - \psi_F(v)$ equals the difference mod $2$ of the framings prescribed by $F \circ D(f)$ and $F$ over the homotopy class $v$.  That is,  $\psi_F(f_*(v)) - \psi_F(v)  = s_F([f])(v)$ mod $2$.
\end{proof}

\begin{Corollary} \label{maincor}
A stable framing $F$ defines isomorphisms from the extensions of $(\ref{exteqn})$,
\[
\begin{diagram}
\divide\dgARROWLENGTH by 2
\node{0} \arrow{e} \node{H_p(M_r) } \arrow{e,t}{} \arrow{s,r}{} \node{{\rm Aut}_{}(M_r)} \arrow{e,t}{\alpha} \arrow{s,r}{\mathcal{E}} \node{{\rm Aut}(\phi_r)} \arrow{s,r}{=} \arrow{e} \node{1}\\
\node{0} \arrow{e} \node{H_p(M_r) \otimes \pi_{2p}(S^p)} \arrow{e,t}{} \node{\mathcal{E}_{}(M_r)} \arrow{e,t}{\alpha} \node{{\rm Aut}(\phi_r)} \arrow{e} \node{1,}
\end{diagram}
\]
to the extensions of $(\ref{gammaexteqn})$ where $c = 12$ or $120$ as $p = 3$ or $7$,
\[
\begin{diagram}
\divide\dgARROWLENGTH by 2
\node{0} \arrow{e} \node{2H^*} \arrow{e,t}{} \arrow{s,r}{} \node{\Gamma(\psi_F, \infty)} \arrow{e,t}{\pi} \arrow{s,r}{} \node{{\rm Aut}(\phi_r)} \arrow{s,r}{=} \arrow{e} \node{1}\\
\node{0} \arrow{e} \node{H^*_c} \arrow{e,t}{} \node{\Gamma(\psi_F, c)} \arrow{e,t}{\pi} \node{{\rm Aut}(\phi_r)} \arrow{e} \node{1.}
\end{diagram}
\]
Hence the extension for ${\rm Aut}(M_r)$ corresponds to $\delta_{\infty}[s(\psi_F)] \in H^2({\rm Aut}(\phi_r); 2H^*)$ and the extension for $\mathcal{E}(M_r)$ corresponds to $\delta_c[s(\psi_F)] \in H^2({\rm Aut}(\phi_r); H_c^*)$.  In particular the extensions above split if and only if and only if $r = 1$.
\end{Corollary}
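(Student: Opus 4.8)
The plan is to deduce the whole corollary, and in particular the final splitting statement, from the algebraic criterion of Proposition \ref{keyextprop}. The genuine content is therefore the identification of each of the two geometric extensions of $(\ref{exteqn})$ with the corresponding algebraic extension of $(\ref{gammaexteqn})$; once these identifications are in place the splitting assertion is immediate. I would organise the argument as: (i) the top row, (ii) the bottom row, (iii) the appeal to Proposition \ref{keyextprop}.

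The top row is essentially already done. By part (2) of Lemma \ref{s_Flem} the map $s_F \times H_p$ is an isomorphism ${\rm Aut}(M_r) \cong \Gamma(\psi_F) \subset \Gamma(\phi_r)$, and by its very definition it intertwines $H_p$ with the projection $\pi : \Gamma(\psi_F) \rightarrow {\rm Aut}(\phi_r)$. Since $s_F \circ I = {\rm Id}_{2H^*}$, the same isomorphism carries the kernel inclusion $I : 2H^* \rightarrow {\rm Aut}(M_r)$ to $i : 2H^* \rightarrow \Gamma(\psi_F)$. Hence $s_F \times H_p$ is an isomorphism of extensions from the top row of the first diagram to the top row of the second, and by the discussion of Section \ref{algsec} identifying $\Gamma(\psi,C)$ with $\partial_c[s(\psi)]$ this recognises the extension class of ${\rm Aut}(M_r)$ as $\partial_\infty[s(\psi_F)] \in H^2({\rm Aut}(\phi_r); 2H^*)$.

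For the bottom row I would use the surjection $\mathcal{E} : {\rm Aut}(M_r) \rightarrow \mathcal{E}(M_r)$ of Proposition \ref{surgprop}. By the first diagram it is the identity on the quotient ${\rm Aut}(\phi_r)$ and, by Baues' Theorem $10.3$, it restricts on $\ker(H_p) = 2H^*$ to $({\rm Id} \otimes J)\circ {\rm PD}$. Transporting $\mathcal{E}$ along the top isomorphism yields a surjection $\Gamma(\psi_F) \rightarrow \mathcal{E}(M_r)$ whose kernel lies in $2H^*$. The decisive step is to check that this kernel coincides with the kernel of the algebraic reduction $P_\infty : \Gamma(\psi_F) \rightarrow \Gamma(\psi_F, \mathbb{Z}_c)$, equivalently that after the Poincar\'{e} duality and $J$-homomorphism identifications the geometric map $({\rm Id} \otimes J)\circ {\rm PD}$ on $2H^*$ agrees with the reduction $2H^* \rightarrow 2H^*_c$. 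Granting this, $\mathcal{E}(M_r)$ and $\Gamma(\psi_F, \mathbb{Z}_c)$ are the same quotient of $\Gamma(\psi_F)$; the induced map on kernels together with the identity on ${\rm Aut}(\phi_r)$ assembles into an isomorphism of the bottom extensions, identifying the class of $\mathcal{E}(M_r)$ with $\partial_c[s(\psi_F)] \in H^2({\rm Aut}(\phi_r); 2H^*_c)$.

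With both identifications established the final statement is a direct invocation of Proposition \ref{keyextprop}: the case $C = \mathbb{Z}$, i.e. $c = \infty$, governs the extension for ${\rm Aut}(M_r)$, and the case $C = \mathbb{Z}_c$ with $c = 12$ or $120$ — both divisible by $4$ — governs the extension for $\mathcal{E}(M_r)$, so in each case the extension splits if and only if $r = 1$. I expect the main obstacle to be the bottom identification of the previous paragraph: matching the surgery-theoretic map $({\rm Id}\otimes J)\circ {\rm PD}$ on the kernel with the purely algebraic reduction requires keeping careful track of how Poincar\'{e} duality and the $J$-homomorphism normalise the chosen generators of $S\pi_p(SO_p)$ and $S\pi_{2p}(S^p)$, and of the ${\rm Aut}(\phi_r)$-module structures on the two kernels, in particular the effect of the Hopf invariant one element recorded in \cite{B}[\S 9]. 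Once the generators are normalised compatibly the remaining verifications are formal.
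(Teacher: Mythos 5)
Your proposal is correct and follows the paper's own proof essentially step for step: the top row via Lemma \ref{s_Flem}(2) together with $s_F \circ I = {\rm Id}_{2H^*}$, the bottom row by transporting the surjection $\mathcal{E}$ of Proposition \ref{surgprop} and identifying its kernel with $I(2cH^*) \subset I(2H^*)$ so that $\mathcal{E}(M_r) \cong \Gamma(\psi_F, \mathbb{Z}_c)$, and the splitting statement by invoking Proposition \ref{keyextprop} (your observation that $c = 12$ or $120$ is divisible by $4$ is exactly what makes that proposition applicable). The ``decisive step'' you flag --- matching $({\rm Id}\otimes J)\circ{\rm PD}$ on the kernel with the algebraic reduction --- is precisely the point the paper disposes of in one sentence via the surjectivity of $J$ and the identification $S\pi_p(SO_p) = 2\mathbb{Z}$, so your more cautious treatment of generator normalisation is a refinement of, not a departure from, the published argument.
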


\begin{proof}
By Lemma \ref{s_Flem2} and Proposition \ref{prop:Brown+} $\text{(iv)}$, $s_F \times \alpha : {\rm Aut}(M_r) \cong \Gamma(\psi_F, \infty)$ defines an isomorphism commuting with $\alpha$ and $\pi$ and this proves the corollary for ${\rm Aut}(M_r)$ since by definition $\Gamma(\psi_F, \infty) \rightarrow \rm{Aut}(\phi_r)$ is the extension defined by $\partial_{\infty}[s(\psi_F)]$.  The corollary for $\mathcal{E}(M_r)$ follows from the fact that $\mathcal{E} : {\rm Aut}(M_r) \rightarrow \mathcal{E}(M_r)$ is onto with kernel $c \cdot H_p(M_r) \subset H_p(M_r)$.  This identifies $\mathcal{E}(M_r)$ with $\Gamma(\psi_F, c)$.  By Proposition \ref{keyextprop} these extensions split if and only if $r  = 1$.
\end{proof}

\noindent
\textsc{Diarmuid Crowley\\
School of Mathematical Sciences \\
University of Adelaide \\
Australia, 5005.}\\ \\
{\it E-mail address:} \texttt{diarmuidc23@gmail.com}

\end{document}